\newtheorem{theorem}{Theorem}[section]
\newtheorem{proposition}[theorem]{Proposition}
\newtheorem{lemma}[theorem]{Lemma}
\theoremstyle{definition}
\newtheorem{example}[theorem]{Example}
\newtheorem{remark}[theorem]{Remark}
\begin{document}

\title[Reducedness and Gr\"obner bases of Specht ideals]{A note on the reducedness and Gr\"obner bases of Specht ideals}

\author{Satoshi Murai}
\address{
Satoshi Murai,
Department of Mathematics
Faculty of Education
Waseda University,
1-6-1 Nishi-Waseda, Shinjuku, Tokyo 169-8050, Japan}
\email{s-murai@waseda.jp}

\author{Hidefumi Ohsugi}
\address{
Hidefumi Ohsugi,
Department of Mathematical Sciences, School of Science,
Kwansei Gakuin University, Sanda, Hyogo 669-1337, Japan
}
\email{ohsugi@kwansei.ac.jp}

\author{Kohji Yanagawa}
\address{
Kohji Yanagawa,
Department of Mathematics, Kansai University, Suita 564-8680, Japan.
}
\email{yanagawa@kansai-u.ac.jp}

\newcommand{\add}[1]{\ensuremath{\langle{#1}\rangle}}
\newcommand{\tab}[1]{\ensuremath{{\rm Tab}({#1})}}
\newcommand{\stab}[1]{\ensuremath{{\rm STab}({#1})}}
\newcommand{\ini}[1]{\ensuremath{{\rm in}({#1})}}
\newcommand{\Fc}{{\mathcal F}}
\newcommand{\Ib}{{\mathbf I}}
\newcommand{\Vb}{{\mathbf V}}

\begin{abstract}
The Specht ideal of shape $\lambda$, where $\lambda$ is a partition, is the ideal generated by all Specht polynomials of shape $\lambda$.
Haiman and Woo proved that these ideals are reduced and found their universal Gr\"obner bases.
In this short note, we give a short proof for these results.
\end{abstract}

\maketitle

\section{Introduction}

Let $S=K[x_1,\dots,x_n]$ be a polynomial ring over an infinite field $K$.
Let $\lambda$ be a partition of $n$ and $\tab \lambda$ the set of tableaux of shape $\lambda$.
For a tableau $T \in \tab \lambda$, its {\bf Specht polynomial} $f_T$ is the product of all $x_i-x_j$ such that $i$ and $j$ are in the same column of $T$.
The ideal $I_\lambda$ of $S$ generated by all Specht polynomials of shape $\lambda$ is called the {\bf Specht ideal} (or the {\bf Garnir ideal}) of shape $\lambda$.
Such ideals are known to be related to various topics in mathematics including combinatorial commutative algebra \cite{MW,SY,SY2}, algebra and combinatorics of subspace arrangements \cite{BCES,BGS,BPS}, graph theory \cite{deL,LL,Lo}, equivariant cohomologies of Springer fibers \cite{HW} and symmetric system of equations \cite{MRV}.
Haiman and Woo proved, in their unpublished manuscript \cite{HW}, two basic but important results on Specht ideals. They proved that these ideals are reduced and found their universal Gr\"obner bases.
In this short note, we give a short proof for these results.

We quickly explain the results of Haiman and Woo (undefined notation will be explained in the next section).
They actually consider ideals which are more general than Specht ideals.
Let $P_n$ be the set of all partitions of $n$, 
and let $\Fc$ be a lower filter of $P_n$ w.r.t.\ a dominance order $\lhd$.
We call the ideal
$I_\Fc= \sum_{\lambda \in \Fc} I_\lambda$
the {\bf Specht ideal of a filter $\Fc$}.
These ideals generalize usual Specht ideals, since $I_\lambda=I_{\{\mu \in P_n \mid \mu \unlhd \lambda\}}$. 
Indeed, it is known that $I_\lambda$ contains $I_\mu$ when $\mu \lhd \lambda$ (see \cite[Theorem 1]{MRV}). 
Consider the natural permutation action of the $n$-th symmetric group $\mathfrak S_n$ on $K^n$.
For each point $\bm a \in K^n$,
the stabilizer subgroup of $\mathfrak S_n$ for $\bm a$ by this action must be isomorphic to a Young subgroup $\mathfrak S_{\mu_1}\times \cdots \times \mathfrak S_{\mu_r}$
for some $\mu=(\mu_1,\dots,\mu_r) \in P_n$.
This partition $\mu$ is called the {\bf orbit type} of $\bm a$ and will be denoted by $\Lambda(\bm a)$.
For example, $\Lambda((4,0,2,4,2,4))=(3,2,1)$.
For $\mu \in P_n$ we define $H_\mu=\{\bm a \in K^n \mid \Lambda(\bm a)=\mu\}$. 
For any subset $\mathcal G \subset P_n$
we define the reduced ideal
$$\textstyle J_{\mathcal G}=\{f\in S\mid f(\bm a)=0 \mbox{ for all } \bm a \in \bigcup_{\mu \in \mathcal G} H_\mu \}.$$
Now we state the result of Haiman and Woo \cite{HW}.

\begin{theorem}[Haiman--Woo]
\label{maintheorem}
Let $\Fc \subset P_n$ be a nonempty lower filter w.r.t.\ the dominance order and let $G_\Fc=\{ f_T \mid T \in \bigcup_{\lambda \in \Fc}  \tab \lambda\}$.
\begin{itemize}
    \item[(1)] $I_\Fc = J_{P_n\setminus \Fc}$, in particular, $I_\Fc$ is reduced.
    \item[(2)] $G_\Fc$ is a universal Gr\"obner basis for $I_\Fc$.
\end{itemize}
\end{theorem}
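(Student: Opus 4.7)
The plan is to dispatch both parts of the theorem simultaneously via a Hilbert-function comparison, after first setting up the easy inclusion. The inclusion $I_\Fc\subseteq J_{P_n\setminus\Fc}$ rests on the classical fact that if $\lambda\in\Fc$ and $\bm a\in H_\mu$ with $\mu\notin\Fc$, then $f_T(\bm a)=0$ for every $T\in\tab\lambda$: indeed $f_T(\bm a)\ne 0$ would require a column-distinct filling of $\lambda$ with content $\mu$, i.e., $K_{\lambda\mu}>0$, i.e.\ $\mu\unlhd\lambda$, which contradicts the filter property of $\Fc$. The matching set-theoretic inclusion $V(I_\Fc)\subseteq\bigcup_{\mu\notin\Fc}H_\mu$ follows similarly: if $\Lambda(\bm a)=\lambda\in\Fc$ then $K_{\lambda\lambda}=1$ produces some $T\in\tab\lambda$ with $f_T(\bm a)\ne 0$.

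Next I would fix an arbitrary monomial order $<$ and let $M$ be the set of monomials of $S$ not divisible by ${\rm in}_<(f_T)$ for any $T\in\bigcup_{\lambda\in\Fc}\tab\lambda$. Since $I_\Fc$ and $J_{P_n\setminus\Fc}$ are both homogeneous, the containments $\langle{\rm in}_<(G_\Fc)\rangle\subseteq{\rm in}_<(I_\Fc)$ and $I_\Fc\subseteq J_{P_n\setminus\Fc}$, combined with the standard identity $\dim_K(S/I)_d=\dim_K(S/{\rm in}_<(I))_d$, yield
\[
\#\{m\in M:\deg m=d\}\;\ge\;\dim_K(S/I_\Fc)_d\;\ge\;\dim_K(S/J_{P_n\setminus\Fc})_d
\]
for every $d$. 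If I can prove that the monomials in $M$ have linearly independent images in $S/J_{P_n\setminus\Fc}$, then both inequalities collapse to equalities, which simultaneously forces $\langle{\rm in}_<(G_\Fc)\rangle={\rm in}_<(I_\Fc)$ (the Gr\"obner basis property (2), for the arbitrary order $<$, hence universal) and $I_\Fc=J_{P_n\setminus\Fc}$ (the reducedness (1)).

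The heart of the proof is thus this linear independence. My plan is to attach to each $m\in M$ a point $\bm a_m\in H_{\mu(m)}$ with $\mu(m)\notin\Fc$ so that the evaluation matrix $[m'(\bm a_m)]_{m,m'\in M}$ is triangular with nonzero diagonal under a suitable total order on $M$ refining $<$. The guiding idea is that if $m\in M$, then for every tableau $T$ with shape in $\Fc$ some column of $T$ must be compatible with the exponent pattern of $m$ in the sense of allowing repeated values there; assembling these constraints over all $T$ should carve out a set partition of $\{1,\dots,n\}$ whose block-size partition $\mu(m)$ lies in $P_n\setminus\Fc$, and $\bm a_m$ will be a generic point having those level sets. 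The main obstacle is precisely this combinatorial assignment $m\mapsto(\mu(m),\bm a_m)$ and the verification of triangularity: the initial terms of Specht polynomials are products of columnwise Vandermonde leading monomials---non-squarefree and sensitive to both the tableau and the monomial order---so describing $M$ explicitly and matching its elements to labeled set partitions of type outside $\Fc$ is where the real technical work lies.
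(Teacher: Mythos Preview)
Your setup through the Hilbert-function inequalities is correct, but the proposal stops exactly where the content begins: you yourself flag the assignment $m\mapsto(\mu(m),\bm a_m)$ and the triangularity of the evaluation matrix as ``where the real technical work lies,'' and you do not carry it out. Because the standard-monomial set $M$ depends on the \emph{arbitrary} monomial order $<$, you would need a uniform combinatorial description of $M$ valid for every order before you can attach points and check triangularity; nothing in the proposal indicates how to do this, and the sketch of ``assembling column constraints over all $T$'' into a set partition of type outside $\Fc$ is not an argument. As written this is a plausible strategy outline, not a proof.

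The paper avoids this difficulty altogether. Rather than treat an arbitrary order directly, it first proves the Gr\"obner property only for the lexicographic order $x_1<\cdots<x_n$, by induction on $n$: writing $f=\sum_{k=0}^d g_k x_n^k\in J_{P_n\setminus\Fc}$ with $g_d\ne 0$, one shows $g_d\in J_{(P_n\setminus\Fc)_{d+1}}$ by choosing, for each $\bm a\in H_\lambda$ with $\lambda+\add{d+1}\notin\Fc$, at least $d{+}1$ distinct last coordinates $\alpha$ with $(\bm a,\alpha)\in\bigcup_{\mu\notin\Fc}H_\mu$, so that the degree-$d$ univariate polynomial $f(\bm a,x_n)$ vanishes identically. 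Induction then produces a tableau $T$ on $n-1$ boxes with $\ini{f_T}\mid\ini{g_d}$; placing $n$ in row $d{+}1$ of its shape yields $T'$ with $\ini{f_{T'}}\mid\ini f$. This already gives $I_\Fc=\langle G_\Fc\rangle=J_{P_n\setminus\Fc}$, proving (1). Universality (2) then comes for free: each $f_T$ is a product of linear forms $x_i-x_j$, so $\mathrm{in}_<(f_T)$ depends only on the relative ordering of the variables under $<$, and by the $\mathfrak S_n$-symmetry of $G_\Fc$ the lex result transports to every order. This two-step reduction---one fixed order plus symmetry---is precisely what your direct, order-agnostic approach is missing.
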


\begin{remark}
Theorem \ref{maintheorem} (1) was proved by Li and Li \cite{LL} when $P_n\setminus \Fc$ is a set of the form $\{\mu \in P_n \mid \mu \unrhd (m,1^{n-m})\}$
and by Kleitman and Lov\'asz \cite{Lo} when $\Fc$ is a set of the form $\{ \lambda \in P_n \mid \lambda \unlhd (m,1^{n-m})\}$.
Also, in the latter case, the Gr\"obner basis result is due to de Loera \cite{deL}.
\end{remark}

The proof of the theorem given in \cite{HW} is based on a careful analysis of the module $M_v=I_\lambda/I_{\{\mu \in P_n \mid \mu \lhd \lambda\}}$.
We give a short proof of the theorem based on induction on the numbers of the variables.

\section{Notation}

A {\bf partition} of a positive integer $n$ is a non-increasing sequence of positive integers $\lambda=(\lambda_1,\dots,\lambda_m)$ with $\lambda_1+ \cdots+\lambda_m=n$.
We write $P_n$ for the set of all partitions of $n$.
For $\lambda=(\lambda_1,\dots,\lambda_m) \in P_n$ and $\mu =(\mu_1,\dots,\mu_l) \in P_n$,
we write $\lambda \unrhd \mu$ if $\lambda$ is equal to or larger than $\mu$ w.r.t.\ the dominance order,
that is,
$$\lambda_1+ \cdots+\lambda_k \geq \mu_1+ \cdots + \mu_k \ \ \ \mbox{ for }k=1,2,\dots,\min\{m,l\}.$$
Let  $\mathcal F$ be a subset of $P_n$.
We say that $\mathcal F$ is a {\bf lower filter} of $P_n$ if it satisfies that, for any $\lambda \in \Fc$ and $\mu \in P_n$ with $\mu \unlhd \lambda$, one has $\mu \in \mathcal F$.
We define an {\bf upper filter} of $P_n$ similarly.

\ytableausetup{centertableaux,boxsize=0.3em}
 A partition $\lambda$ is frequently represented by its Young diagram. For example $(4,2,1)$ is represented as $\ydiagram {4,2,1}$. A {\bf (Young) tableau} of shape $\lambda$ is a bijective filling of the Young diagram of  $\lambda$
by the integers in $\{1,2, \ldots, n\}$. 
For example, 
$$
\ytableausetup{mathmode, boxsize=1.3em}
\begin{ytableau}
3 & 2 & 1& 7   \\
4 & 5    \\
6 \\
\end{ytableau}
$$
is a tableau of shape $(4,2,1)$. Let $\tab \lambda$ be the set of all tableaux of shape $\lambda$.
Recall that the Specht polynomial $f_T$ of $T \in \tab \lambda$ is the product of all $x_i-x_j$ such that $i$ and $j$ are in the same column of $T$ (and $j$ is in a lower position than $i$).
For example, if $T$ is the above tableau, 
then $f_T=(x_3-x_4)(x_3-x_6)(x_4-x_6)(x_2-x_5).$ 
If $\sigma \in \mathfrak S_n$ is a column stabilizer of $T$, we have $f_{\sigma T} ={\rm sgn}(\sigma)f_T$.  
In this sense, to consider $f_T$, we may assume that $T$ is {\bf column standard}, that is, all columns are increasing from top to bottom.  If a column standard tableau $T$ is also row standard (i.e., all rows are increasing from left to right), we say $T$ is {\bf standard}. Let $\stab \lambda$ be the set of all standard tableaux of shape $\lambda$.  It is a classical result that $\{ f_T \mid T \in \stab \lambda \}$ is a basis of the vector space spanned by $\{ f_T \mid T \in \tab \lambda \}$. 
We also remark that the $\mathfrak S_n$-module 
isomorphic to this vector space is called the Specht module of $\lambda$, and fundamental in the representation  theory of  symmetric groups (especially in the characteristic 0 case).

It is clear from the definition of Specht polynomials that $f_T(a_1,\dots,a_n)=0$ if there are distinct integers $i$ and $j$ such that $a_i=a_j$ and both $i$ and $j$ are in the same column of $T$.
Using this fact, it is not hard to see the following property (see \cite[Corollary~1]{MRV}).

\begin{lemma}
\label{IVI}
Let $\lambda \in P_n$ and $T \in \tab \lambda$.
Then $f_T(\bm a)=0$ for any $\bm a \in H_\mu$ with $\mu \not \! \unlhd \lambda$.
In other words, $f_T$ belongs to $J_\Fc$ where $\Fc = \{ \mu \in P_n \mid \mu \not \! \unlhd \ \lambda \}$.
\end{lemma}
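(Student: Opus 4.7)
\medskip

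\noindent\textbf{Proof proposal.} The plan is to prove the contrapositive: if $f_T(\bm a)\neq 0$ for some $\bm a\in H_\mu$, then $\mu \unlhd \lambda$. By definition of $f_T$, the nonvanishing $f_T(\bm a)\neq 0$ means that no column of $T$ contains two distinct indices $i,j$ with $a_i=a_j$. So the strategy is to translate this ``no repetition in columns'' statement into a system of inequalities comparing partial sums of $\mu$ and $\lambda$, and then recognize these as the dominance inequalities $\mu\unlhd\lambda$.

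First I would fix notation: let $\lambda'=(\lambda'_1,\lambda'_2,\dots)$ denote the conjugate partition, so that the $j$-th column of $T$ has length $\lambda'_j$, and write $C_j\subset\{1,\dots,n\}$ for the set of entries appearing in that column. Since $\bm a\in H_\mu$, its stabilizer is a Young subgroup of type $\mu$, so the level sets of the map $i\mapsto a_i$ partition $\{1,\dots,n\}$ into blocks $B_1,\dots,B_r$ with $|B_s|=\mu_s$. The assumption $f_T(\bm a)\neq 0$ is exactly the statement that $|C_j\cap B_s|\leq 1$ for all $j$ and $s$.

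The key counting step is then: for every $k\geq 1$ and every column $j$,
\[
|C_j\cap(B_1\cup\cdots\cup B_k)|\leq \min(\lambda'_j,\,k),
\]
since the left-hand side is bounded both by $|C_j|=\lambda'_j$ and by the number $k$ of blocks involved. Summing over $j$ and using the standard identity $\sum_j \min(\lambda'_j,k)=\lambda_1+\cdots+\lambda_k$ (which just counts the cells of $\lambda$ in its first $k$ rows), one obtains
\[
\mu_1+\cdots+\mu_k \;=\; \bigl|B_1\cup\cdots\cup B_k\bigr| \;\leq\; \lambda_1+\cdots+\lambda_k
\]
for every $k$, which is exactly $\mu\unlhd\lambda$. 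The second assertion of the lemma then follows immediately: if $\Fc=\{\mu\in P_n\mid \mu\not\unlhd\lambda\}$, the argument shows $f_T$ vanishes on every $H_\mu$ with $\mu\in\Fc$, hence $f_T\in J_\Fc$.

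The only potential obstacle is the bookkeeping of the counting identity $\sum_j\min(\lambda'_j,k)=\lambda_1+\cdots+\lambda_k$, but this is a routine Young-diagram observation (the left-hand side counts cells in the first $k$ rows column by column, the right-hand side row by row), so the proof is essentially mechanical once the right inequality is written down.
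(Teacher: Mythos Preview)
Your argument is correct. The paper does not actually supply a proof of this lemma: it records the key observation that $f_T(\bm a)=0$ whenever some column of $T$ contains two indices $i,j$ with $a_i=a_j$, and then defers to \cite[Corollary~1]{MRV}. Your contrapositive counting argument is precisely the standard way to convert that observation into the dominance inequalities, so you have simply written out in full what the paper leaves implicit; the approach is the same, and the bookkeeping identity $\sum_j \min(\lambda'_j,k)=\lambda_1+\cdots+\lambda_k$ is indeed the routine Young-diagram fact you describe.
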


Let $<$ be a monomial order on $S$.
We write $\mathrm{in}_{<}(f)$ for the initial monomial of $f \in S$ w.r.t.~$<$.
Let $I$ be an ideal of $S$.
A finite subset $G \subset I$ is called a {\bf Gr\"{o}bner basis}
of $I$ w.r.t.~$<$ if the ideal $\langle \mathrm{in}_{<}(f) \mid 0 \ne f \in I \rangle$
is generated by $\{\mathrm{in}_{<}(g) \mid  g \in G \}$.
A finite subset $G$ of $I$ of $S$ is called a {\bf universal Gr\"{o}bner basis} of $I$ if $G$ is a Gr\"{o}bner basis of $I$ 
 w.r.t.~any monomial order on $S$.
 See, e.g., \cite[Chapter 1]{binomialideals} for the details on Gr\"{o}bner bases.

\section{A short proof of Theorem \ref{maintheorem}}

For a partition $\lambda=(\lambda_1,\dots,\lambda_m) \in P_n$
and an integer $k>0$,
we write $\lambda+\add k$ for the partition of $n+1$ obtained by rearranging the sequence $(\lambda_1,\dots,\lambda_k+1,\dots,\lambda_m)$,
where we set $\lambda+\add k=(\lambda_1,\dots,\lambda_m,1)$ when $k>m$.
For example $(4,2,2,1)+\add 3=(4,3,2,1)$.
Since $\lambda + \add j  \unlhd \lambda + \add i$ holds for any $\lambda \in P_n$ and $i \le j$,
we have the following immediately.

\begin{lemma}
\label{upper}
Let $\mathcal F \subset P_n$ be an upper filter, and let $\lambda \in P_{n-1}$.
If $\lambda + \add j \in \mathcal F$, then $\lambda + \add i \in \mathcal F$ for all $i \le j$.
\end{lemma}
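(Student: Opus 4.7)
The plan is to deduce the lemma directly from the dominance comparison $\lambda+\add j \unlhd \lambda+\add i$ for $i \le j$ asserted in the sentence preceding the statement. Once this comparison is in hand, the conclusion is immediate from the definition of an upper filter: if $\lambda+\add j \in \mathcal F$ and $\lambda+\add i \unrhd \lambda+\add j$, then $\lambda+\add i \in \mathcal F$. So essentially all the work is in verifying the dominance inequality.

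To verify that comparison, I would first unpack how $\lambda+\add i$ sits inside the Young diagram of $\lambda$. Adding a box to row $i$ of $\lambda=(\lambda_1,\dots,\lambda_m)$ and then rearranging amounts to attaching the extra box at the right end of the topmost row whose length equals $\lambda_i$; denote the index of that row by $r(i)$, with the convention $r(i)=m+1$ when $i>m$. Then the $k$-th partial sum of $\lambda+\add i$ equals $\lambda_1+\cdots+\lambda_k$ when $k<r(i)$ and equals $\lambda_1+\cdots+\lambda_k+1$ when $k\ge r(i)$.

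Because $i\le j$ implies $\lambda_i \ge \lambda_j$, we have $r(i)\le r(j)$. Hence for every $k$ the $k$-th partial sum of $\lambda+\add i$ is at least the $k$-th partial sum of $\lambda+\add j$, which is exactly the relation $\lambda+\add j \unlhd \lambda+\add i$. Combining this with the upper-filter hypothesis on $\mathcal F$ gives $\lambda+\add i \in \mathcal F$, as required.

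The only delicate point, and really the one worth spelling out, is the correct interpretation of the rearrangement step in the definition of $\lambda+\add i$ when several consecutive parts of $\lambda$ coincide; once this identification with the index $r(i)$ is made, the rest is a routine inequality between partial sums. I do not anticipate any serious obstacle beyond bookkeeping.
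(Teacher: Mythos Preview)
Your proposal is correct and follows exactly the route the paper indicates: the paper simply asserts $\lambda+\add j \unlhd \lambda+\add i$ for $i\le j$ in the sentence preceding the lemma and declares the result immediate, while you supply the partial-sum verification of that dominance inequality. There is nothing to add; your identification of the effective row $r(i)$ and the monotonicity $r(i)\le r(j)$ is the right way to make the rearrangement step precise.
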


For any subset $\Fc \subset P_n$,
we define
$$
\Fc_k = \{ \mu \in P_{n-1}  \mid  \mu + \add k \in \Fc\}.
$$
Note that if $\Fc$ is an upper (lower) filter, then so is $\Fc_k$.

\begin{example}
Let $\Fc=\{411,33,42,51,6\}$, where $411$ means $(4,1,1)$.
Then $\Fc_1=\{311,32,41,5\},$ $\Fc_2=\{32,41,5\}$, and $\Fc_k=\{41,5\}$ for $k \geq 3$.
\end{example}

\begin{lemma}
\label{keylemma}
Let $\Fc \subset P_n$ be an upper filter, and let $f$ be a polynomial in $J_{\Fc}$
of the form
$$
f= g_d x_n^d  + \cdots +  g_1 x_n +g_0,
$$
where $g_0,\dots,g_d \in K[x_1,\dots,x_{n-1}]$ and $g_d\ne 0$.
Then $g_0,\dots,g_d$ belong to $J_{\Fc_{d+1}}$
\end{lemma}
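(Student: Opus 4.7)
The plan is to view $f$ as a univariate polynomial in $x_n$ of degree $d$ with coefficients in $K[x_1,\dots,x_{n-1}]$, evaluate it at a point $(\bm b, c)$ where $\bm b$ is fixed with $\Lambda(\bm b)\in \Fc_{d+1}$, and produce at least $d+1$ distinct values of $c\in K$ for which $f(\bm b,c)=0$. Since the resulting polynomial in $c$ has degree at most $d$, it must then be identically zero, forcing $g_i(\bm b)=0$ for each $i$. As $\bm b$ ranges over $\bigcup_{\mu\in\Fc_{d+1}} H_\mu$, this yields $g_i\in J_{\Fc_{d+1}}$.

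First I would fix $\bm b\in K^{n-1}$ with $\mu:=\Lambda(\bm b)\in\Fc_{d+1}$, which by definition means $\mu+\add{d+1}\in\Fc$. Since $\Fc$ is an upper filter, Lemma~\ref{upper} (applied to $\mu$) gives $\mu+\add k\in\Fc$ for every $k\leq d+1$. Let $v_1,\dots,v_r\in K$ be the distinct coordinates of $\bm b$; after relabeling I may assume $v_k$ occurs with multiplicity $\mu_k$.

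Next I would analyze the orbit type of $(\bm b,c)\in K^n$ as $c$ varies. If $c=v_k$ for some $k\in\{1,\dots,r\}$, then the multiset of multiplicities becomes $(\mu_1,\dots,\mu_k+1,\dots,\mu_r)$, whose sorted form is exactly $\mu+\add k$. If $c\notin\{v_1,\dots,v_r\}$, then the multiplicities become $(\mu_1,\dots,\mu_r,1)$, i.e., $\Lambda((\bm b,c))=\mu+\add{r+1}$. I would then split into two cases. If $d+1\leq r$, the values $v_1,\dots,v_{d+1}$ are $d+1$ distinct elements of $K$ with $\Lambda((\bm b,v_k))=\mu+\add k\in \Fc$, so $f(\bm b,v_k)=0$ for each. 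If $d+1>r$, then by the convention in the definition, $\mu+\add{d+1}=\mu+\add{r+1}$, and this partition belongs to $\Fc$; therefore every $c\in K\setminus\{v_1,\dots,v_r\}$ gives $\Lambda((\bm b,c))\in\Fc$, and since $K$ is infinite, this supplies infinitely many (in particular more than $d$) values of $c$ where $f(\bm b,c)$ vanishes.

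In both cases the polynomial $g_d(\bm b)c^d+\cdots+g_1(\bm b)c+g_0(\bm b)\in K[c]$ of degree at most $d$ has more than $d$ roots, so all its coefficients vanish: $g_0(\bm b)=\cdots=g_d(\bm b)=0$. Varying $\bm b$ over $\bigcup_{\mu\in\Fc_{d+1}} H_\mu$ completes the argument. The only real subtlety is the bookkeeping matching $c=v_k$ with the operation $\mu\mapsto\mu+\add k$ (including the boundary case $k>r$), and recognizing that the upper filter hypothesis plus Lemma~\ref{upper} delivers precisely the $d+1$ admissible values of $c$ needed to kill a degree-$d$ polynomial; once this is in place the conclusion is immediate, using the hypothesis that $K$ is infinite.
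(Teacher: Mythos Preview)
Your argument is correct and follows essentially the same route as the paper: fix a point $\bm b$ of orbit type $\mu\in\Fc_{d+1}$, use Lemma~\ref{upper} to ensure $\mu+\add k\in\Fc$ for all $k\le d+1$, and split into the cases $r\ge d+1$ (use the existing values $v_1,\dots,v_{d+1}$) and $r<d+1$ (use new values, which exist since $K$ is infinite) to produce $d+1$ roots of the degree-$d$ polynomial $f(\bm b,x_n)$. The only cosmetic difference is the order in which you present the two cases.
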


\begin{proof}
Let $\lambda=(\lambda_1,\dots,\lambda_m) \in \Fc_{d+1}$ and ${\bm a} \in H_\lambda$.
Suppose that $\alpha_i \in K$ appears $\lambda_i$ times in ${\bm a}$ for $i=1,2,\ldots,m$,
where $\alpha_1,\ldots,\alpha_m$ are distinct each other.
By Lemma~\ref{upper}, we have $\lambda + \add i \in \Fc$ for all $1\le i \le d+1$.

If $m< d+1$, then $\lambda + \add {d+1} = (\lambda_1, \dots, \lambda_m,1)$.
Thus, for any $\alpha \in K\setminus \{\alpha_1,\dots, \alpha_m \}$,
$({\bm a}, \alpha)$ belongs to $H_{\lambda + \add {d+1}}$, and hence $f({\bm a}, \alpha) =0$.
If $m \ge d+1$, then $({\bm a}, \alpha_i)$ belongs to $H_{\lambda + \add i}$ for each $i = 1,2,\dots,d+1$.
Thus we have $ f({\bm a}, \alpha_i) =0$ for any $i = 1,2,\dots,d+1$.

In both cases, the polynomial
$
f({\bm a}, x_n) = \sum_{k=0}^d  g_k({\bm a}) x_n^k\in K[x_n]
$
of degree $d$ has at least $d+1$ roots.
This tells that
$f({\bm a}, x_n)$ is the zero polynomial in $K[x_n]$.
Hence each $g_i({\bm a}) =0$.
Thus we have $g_i \in J_{\Fc_{d+1}}$ for all $i$, as desired.
\end{proof}

\begin{proposition}
\label{lexGB}
Let $\mathcal F \subset P_n$ be a nonempty lower filter.
Then 
$$\textstyle G_{\mathcal F}= \left\{ f_T \mid T \in \bigcup_{\lambda \in \mathcal F} \tab \lambda\right\}$$ is a Gr\"{o}bner basis of $J_{P_n \setminus \Fc}$ w.r.t.~ the lexicographic order $<_{\mathrm{lex}}$
with $x_1 < \cdots < x_n$.
\end{proposition}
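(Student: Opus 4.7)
The plan is induction on $n$, using Lemma \ref{keylemma} to peel off the last variable $x_n$. First, I would note the inclusion $G_\Fc \subseteq J_{P_n\setminus \Fc}$: for $T \in \tab{\lambda}$ with $\lambda \in \Fc$, Lemma \ref{IVI} gives $f_T \in J_{\{\mu \in P_n : \mu \not\unlhd \lambda\}}$, and $\{\mu : \mu \not\unlhd \lambda\} \supseteq P_n \setminus \Fc$ because $\Fc$ is a lower filter. The base case $n = 1$ is trivial: the only nonempty lower filter is $\Fc = \{(1)\}$, giving $P_1 \setminus \Fc = \emptyset$, $J_\emptyset = K[x_1]$, and $G_\Fc = \{1\}$.

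For the inductive step, take a nonzero $f \in J_{P_n \setminus \Fc}$ and write
\[
f = g_d x_n^d + g_{d-1} x_n^{d-1} + \cdots + g_0
\]
with $g_0, \ldots, g_d \in K[x_1, \ldots, x_{n-1}]$ and $g_d \ne 0$. Since $x_n$ is largest with respect to $<_{\mathrm{lex}}$, we have $\ini{f} = \ini{g_d} \cdot x_n^d$. Lemma \ref{keylemma} applied to the upper filter $P_n \setminus \Fc$ gives $g_d \in J_{(P_n\setminus \Fc)_{d+1}}$. Setting $\Fc' = \{\mu \in P_{n-1} : \mu + \add{d+1} \in \Fc\}$, one checks $(P_n \setminus \Fc)_{d+1} = P_{n-1} \setminus \Fc'$, so $\Fc'$ is a lower filter of $P_{n-1}$. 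It is nonempty, since otherwise $g_d \in J_{P_{n-1}} = (0)$, contradicting $g_d \ne 0$. By the inductive hypothesis, there is a (column standard, without loss of generality) tableau $T' \in \tab{\mu}$ with $\mu \in \Fc'$ such that $\ini{f_{T'}}$ divides $\ini{g_d}$.

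The final step is to extend $T'$ to a tableau $T \in \tab{\lambda}$, where $\lambda := \mu + \add{d+1} \in \Fc$, by placing $n$ in the unique box of $\lambda$ not in $\mu$. Let $r$ denote the row of this new box; then the column containing $n$ in $T$ has length exactly $r$, and since $n$ is the maximum entry, $T$ is column standard. A direct calculation of leading monomials then yields $\ini{f_T} = \pm \ini{f_{T'}} \cdot x_n^{r-1}$. The one nontrivial point, and the main obstacle I anticipate, is the inequality $r \le d + 1$: if $d = 0$, or $d+1$ exceeds the length of $\mu$, or $\mu_d > \mu_{d+1}$, then no rearrangement is needed and $r = d+1$; otherwise the sequence $(\mu_1, \ldots, \mu_{d+1}+1, \ldots)$ is not weakly decreasing, and sorting moves the new box up to the top of the block of parts equal to $\mu_{d+1}$, giving $r \le d$. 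In every case $x_n^{r-1} \mid x_n^d$, so $\ini{f_T}$ divides $\ini{g_d} \cdot x_n^d = \ini{f}$, completing the induction.
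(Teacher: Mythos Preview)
Your proof is correct and follows essentially the same approach as the paper's: induction on $n$, Lemma~\ref{keylemma} to place $g_d$ in the smaller $J$-ideal, the inductive hypothesis to find a tableau $T'$ whose initial monomial divides $\ini{g_d}$, and then extension by the box of $\mu+\add{d+1}$ to produce the desired tableau. Your treatment is slightly more detailed in justifying the row bound $r\le d+1$ (the paper simply asserts the equivalent $p\le d$) and in spelling out the base case; the only cosmetic point is that the $\pm$ in $\ini{f_T}=\pm\ini{f_{T'}}\cdot x_n^{r-1}$ is unnecessary, since initial monomials are monomials.
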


\begin{proof}
We write $\mathrm{in}(f)=\mathrm{in}_{<_{\mathrm{lex}}}(f)$ for $f \in S$.
We note that, if $T$ is column standard and if the number $i$ is in the $d_i$-th row of $T$, then we have $\ini {f_T}=\prod_{i=1}^n x_i^{d_i-1}$.

Let ${\mathcal F}' = P_n \setminus \Fc$.
The proof is by induction on $n$.
Since Lemma \ref{IVI} tells $G_\Fc \subset J_{{\mathcal F}'}$,
what we must prove is that, for any polynomial $f \in J_{{\mathcal F}'}$,  ${\rm in}(f)$ is divisible by ${\rm in} (f_{T'})$ for some tableau $T'$ of shape $\lambda \in \Fc$.
The assertion is trivial when $n=1$.
Suppose $n>1$.
Let
$$
f= g_d x_n^d  + \cdots +  g_1 x_n +g_0 \in J_{{\mathcal F}'},
$$
where $g_0,\dots,g_d \in K[x_1,\dots,x_{n-1}]$ and $g_d\ne 0$.
Since $g_d \in J_{{\mathcal F}'_{d+1}}$ by Lemma~\ref{keylemma} and since $G_{P_{n-1} \setminus {\mathcal F}'_{d+1}}$ is a Gr\"obner basis of $J_{ {\mathcal F}'_{d+1}}$
by the induction hypothesis,
there is a tableau $T$ of shape $\mu$ with 
$\mu \in P_{n-1} \setminus {\mathcal F}'_{d+1}$ 
such that ${\rm in} (f_T)$ divides ${\rm in} (g_d)$.
(We note that $\Fc'_{d+1} \ne P_{n-1}$ since $g_d \ne 0$ and $J_{P_{n-1}}=\{0\}$.)

Let $\lambda = \mu + \add {d+1}$.
Consider the tableau $T' \in \tab {\lambda}$ such that the image of each $i = 1,2,\dots,n-1$ is same 
for $T$ and $T'$.
Since $\mu \notin {\mathcal F}'_{d+1}$ implies $\lambda \in  {\mathcal F}$,
we have $f_{T'} \in G_{\Fc}$.
We claim ${\rm in}(f_{T'})$ divides ${\rm in}(f)$.
The image of $n$ for $T$ must be in the $(p+1)$-th row with $p \leq d$,
which tells that 
$\ini {f_{T'}} = x_n^p  \ini {f_T}$ divides ${\rm in}(f) = x_n^d {\rm in} (g_d)$, as desired.
Thus $G_{\mathcal F}$ is 
 a Gr\"{o}bner basis of $J_{{\mathcal F}'}$.
\end{proof}

\begin{remark}
Clearly, the subset
$$\textstyle  \left\{ f_T \mid T \in \bigcup_{\lambda \in \mathcal F} \stab \lambda\right\}
\subset G_\Fc$$
is a Gr\"{o}bner basis of $J_{P_n \setminus \Fc}$ w.r.t.~the lexicographic order
with $x_1 < \cdots < x_n$. However, this is still very far from a minimal Gr\"{o}bner basis. 
For example, combining the above observation with \cite[Lemma~4.3.1]{L}, we see that 
$$\textstyle  \left\{ f_T \mid T \in \stab \mu,\, \mu \unlhd \lambda, \, \mu_1 =\lambda_1 \right\}$$
is a  Gr\"{o}bner basis of $I_\lambda$, but this is far from minimal yet (unless $\lambda= (m,1^{n-m})$). 
\end{remark}

Now we are in the position to prove Theorem \ref{maintheorem}.

\begin{proof}[Proof of Theorem \ref{maintheorem}]
Since $G_\Fc$ is a Gr\"obner basis of $ J_{P_n \setminus \Fc}$, we have
$I_\Fc = \langle G_\Fc \rangle =  J_{P_n \setminus \Fc}$.
In particular, $ I_\Fc$ is reduced, and $G_\Fc$ is a Gr\"obner basis of $ I_\Fc$
 w.r.t.~the lexicographic order with
$x_1 < \cdots < x_n$.
We claim that $G_{\Fc}$ is a Gr\"obner basis for any monomial order $<$.
By the symmetry on the variables $x_1,\ldots,x_n$,
$G_\Fc$ is a Gr\"obner basis of $ I_\Fc$ w.r.t.~any lexicographic order.
Let $<$ be any monomial order.
Assume that $x_{i_1} < \cdots < x_{i_n}$ w.r.t.~$<$.
Since each Specht polynomial $f_T$ is a product of linear forms,
we have ${\rm in}_< (f_T) = {\rm in}_{<'} (f_T)$ where
$<'$ is a lexicographic order induced by the ordering $x_{i_1} < \cdots < x_{i_n}$.
This tells that $G_\Fc$ is a Gr\"obner basis of $I_\Fc$ w.r.t.\ $<$.
\end{proof}

\begin{remark}
The assertion in Theorem \ref{maintheorem} (2) holds even for a finite field $K$.
In fact, if $K$ is finite, then Theorem \ref{maintheorem} holds if the coefficient field is an infinite field $K(t)$ of rational functions in 
a variable $t$.
Since the coefficients of each $f_T$ are $\pm 1$,
the variable $t$ will never appear
when we apply Buchberger criterion to $G_\Fc$.
\end{remark}

\bigskip

\noindent
\textbf{Acknowledgments}:
This work was started at the MFO-RIMS Tandem Workshop ``Symmetries on polynomial ideals and varieties" at the Mathematisches Forschungsinstitut Oberwolfach (MFO) and the Research Institute for Mathematical Sciences (RIMS). We thank MFO and RIMS for their kind hospitality.
The first author is partially supported by KAKENHI 21K03190.
The second author is partially supported by KAKENHI 18H01134. The third author is partially supported by KAKENHI 19K03456. We thank professors Yasuhide Numata, Kosuke Shibata, Akihito Wachi and Junzo Watanabe for useful discussions on this project.

\end{document}